\documentclass[11pt]{article}
\usepackage[margin=1in]{geometry} 
\geometry{letterpaper}

\usepackage{amssymb,amsfonts,amsmath,bbm,mathrsfs,stmaryrd}
\usepackage{xcolor}
\usepackage{url}
\usepackage{braket}

\usepackage[colorlinks,
             linkcolor=blue,
             citecolor=black!75!red,
             pdfproducer={pdfLaTeX},
             pdfpagemode=None,
             bookmarksopen=true
             bookmarksnumbered=true]{hyperref}


\newcommand\bC{\mathbb C}

\newcommand\bG{\mathbb G}

\newcommand\bR{\mathbb R}

\newcommand\bY{\mathbb Y}

\newcommand\cB{\mathcal B}

\newcommand\cH{\mathcal H}

\newcommand\cK{\mathcal K}

\newcommand\cM{\mathcal M}


\usepackage{tikz}
\usetikzlibrary{arrows,calc,decorations.pathreplacing,decorations.markings,intersections,shapes.geometric,through,fit,shapes.symbols,positioning,decorations.pathmorphing}

\usepackage[amsmath,thmmarks,hyperref]{ntheorem}
\usepackage{cleveref}

\crefname{section}{Section}{Sections}
\crefformat{section}{#2Section~#1#3} 
\Crefformat{section}{#2Section~#1#3} 

\crefname{subsection}{\S}{\S\S}
\crefformat{subsection}{#2\S#1#3} 
\Crefformat{subsection}{#2\S#1#3}

\theoremstyle{plain}

\newtheorem{lemma}{Lemma}[section]
\newtheorem{theorem}[lemma]{Theorem}
\newtheorem{proposition}[lemma]{Proposition}
\newtheorem{corollary}[lemma]{Corollary}

\makeatletter
\let\xx@thm\@thm
\AtBeginDocument{\let\@thm\xx@thm}
\makeatother

\theoremstyle{nonumberplain}

\newtheorem{qf'bis}{\Cref{le.quot_filt'} bis}

\theoremstyle{plain}
\theorembodyfont{\upshape}
\theoremsymbol{\ensuremath{\blacklozenge}}

\newtheorem{definition}[lemma]{Definition}

\newtheorem{remark}[lemma]{Remark}

\crefname{definition}{definition}{definitions}
\crefformat{definition}{#2definition~#1#3} 
\Crefformat{definition}{#2Definition~#1#3} 

\crefname{ex}{example}{examples}
\crefformat{example}{#2example~#1#3} 
\Crefformat{example}{#2Example~#1#3} 

\crefname{remark}{remark}{remarks}
\crefformat{remark}{#2remark~#1#3} 
\Crefformat{remark}{#2Remark~#1#3} 

\crefname{convention}{convention}{conventions}
\crefformat{convention}{#2convention~#1#3} 
\Crefformat{convention}{#2Convention~#1#3}

\crefname{lemma}{lemma}{lemmas}
\crefformat{lemma}{#2lemma~#1#3} 
\Crefformat{lemma}{#2Lemma~#1#3} 

\crefname{proposition}{proposition}{propositions}
\crefformat{proposition}{#2proposition~#1#3} 
\Crefformat{proposition}{#2Proposition~#1#3} 

\crefname{corollary}{corollary}{corollaries}
\crefformat{corollary}{#2corollary~#1#3} 
\Crefformat{corollary}{#2Corollary~#1#3} 

\crefname{theorem}{theorem}{theorems}
\crefformat{theorem}{#2theorem~#1#3} 
\Crefformat{theorem}{#2Theorem~#1#3} 

\crefname{assumption}{assumption}{Assumptions}
\crefformat{assumption}{#2assumption~#1#3} 
\Crefformat{assumption}{#2Assumption~#1#3} 

\crefname{equation}{}{}
\crefformat{equation}{(#2#1#3)} 
\Crefformat{equation}{(#2#1#3)}

\theoremstyle{nonumberplain}
\theoremsymbol{\ensuremath{\blacksquare}}

\newtheorem{proof}{Proof}
\newcommand\pf[1]{\newtheorem{#1}{Proof of \Cref{#1}}}


\newcommand{\qedhere}{\mbox{}\hfill\ensuremath{\blacksquare}}

\newcommand\coh{\mathrm{Coh}}


\title{Generic quantum metric rigidity}
\author{Alexandru Chirvasitu}

\begin{document}

\date{}

\newcommand{\Addresses}{{
  \bigskip
  \footnotesize
  
  \textsc{Department of Mathematics, University at Buffalo, Buffalo,
    NY 14260-2900, USA}\par\nopagebreak \textit{E-mail address}:
  \texttt{achirvas@buffalo.edu}
}}

\maketitle

\begin{abstract}
  We introduce the coherent algebra of a compact metric measure space by analogy with the corresponding concept for a finite graph. As an application we show that upon topologizing the collection of isomorphism classes of compact metric measure spaces appropriately, the subset consisting of those with trivial compact quantum automorphism group is of second Baire category.

The latter result can be paraphrased as saying that ``most'' compact metric measure spaces have no (quantum) symmetries; in particular, they also have trivial ordinary (i.e. classical) automorphism group. 
\end{abstract}

\noindent {\em Key words: metric measure space, measured Hausdorff topology, compact quantum group, coherent algebra}

\vspace{.5cm}

\noindent{MSC 2010: 20G42; 20B27; 22F50; 54E45; 54E52}

\section*{Introduction}
\label{se.intro}

Coherent algebras were defined in \cite{hig-ca} as self-adjoint subalgebras of $M_n(\bC)$ that are also closed and unital under the so-called Hadamard matrix product: entry-wise multiplication, i.e. multiplication of matrices regarded as functions $[n]^{\times 2}\to \bC$ for $[n]=\{1,\cdots,n\}$. They arise naturally as algebraic counterparts of certain combinatorial structures (see e.g. \cite[Definition 1.1]{cam-coh}):

\begin{definition}\label{def.coh-conf}
  Let $X$ be a finite set. A {\it coherent configuration} on $X$ is a partition of $X\times X$ into sets $R_i$, $i\in I$ (referred to as the {\it coherence classes} of the configuration) such that
  \begin{itemize}
  \item the diagonal $\Delta\subset X\times X$ is a union of $R_j$;    
  \item for each $R_i$ the opposite relation
    \begin{equation*}
      \overline{R_i}:= \{(x,y)\ |\ (y,x)\in R_i\}
    \end{equation*}
    is one of the $R_j$;
  \item for $i,j,k\in I$ there are integers $p_{ij}^k$ such that for all $(x,z)\in R_k$ the number of
    \begin{equation*}
      y\in X,\quad (x,y)\in R_i\text{ and }(y,z)\in R_j
    \end{equation*}
    is $p_{ij}^k$. 
  \end{itemize}  
\end{definition}

Given a coherent configuration, the span of the $X\times X$-indexed matrices
\begin{equation*}
  A(R_i)_{xy}:=
  \begin{cases}
    1 &\text{if }(x,y)\in R_i\\
    0 &\text{otherwise}
  \end{cases}
\end{equation*}
attached to the relations $R_i$ can then be shown to be a coherent algebra in the sense made precise above. Conversely, every coherent subalgebra of $M_n(\bC)$ arises as above for some coherent configuration on $X=[n]$. In conclusion, coherent algebras and configurations are two manifestations of the same structure.

The preeminent example of a coherent configuration is the partitioning of $X\times X$ into $G$-orbits for the diagonal action attached to an action of a finite group $G$ on $X$ (this is referred to as the {\it group case} in \cite{hig-cc1,hig-cc2,hig-ca}). In that context, coherent algebras have a very satisfying representation-theoretic interpretation (see for instance the discussion on \cite[p. 213]{hig-ca}):

\begin{proposition}\label{pr.init}
  Let $G$ be a finite group acting on the finite set $X$, and denote by $C(X)$ the function algebra of $X$, identified with $\bC^{|X|}$ via the standard basis consisting of characteristic functions of singletons.Then, the isomorphism
  \begin{equation*}
    M_{|X|}(\bC)\cong \mathrm{End}(C(X))
  \end{equation*}
  identifies the coherent algebra $\subseteq M_{|X|}(\bC)$ on the left hand side to the algebra $\mathrm{End}_G(C(X))$ of algebra automorphisms of $C(X)$ intertwining the $G$-action.
  \qedhere
\end{proposition}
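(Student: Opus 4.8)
The plan is to make the isomorphism $M_{|X|}(\bC)\cong \mathrm{End}(C(X))$ completely explicit and then recognize both sides as concrete spaces of matrices, so that the statement collapses to a single bookkeeping computation about the permutation representation. First I would fix conventions: $C(X)=\bC^{|X|}$ carries the representation $\rho$ of $G$ determined on the basis $\{e_x\}$ of point-characteristic functions by $\rho(g)e_x=e_{gx}$, so that $\rho(g)$ is the permutation matrix $P_g$ with $(P_g)_{xy}=\delta_{x,gy}$. Under the isomorphism a matrix $T=(T_{xy})$ corresponds to the endomorphism $e_y\mapsto\sum_x T_{xy}e_x$, and I would describe the right-hand side $\mathrm{End}_G(C(X))$ as the commutant $\{T\ :\ \rho(g)T=T\rho(g)\text{ for all }g\in G\}$, i.e. the $G$-equivariant endomorphisms (this is the reading of the displayed formula that matches $M_{|X|}(\bC)$).

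The key step is to translate equivariance into a pointwise condition on entries. Conjugating by $P_g$ yields
\begin{equation*}
  (P_g T P_g^{-1})_{xy}=T_{g^{-1}x,\,g^{-1}y},
\end{equation*}
so $T$ lies in the commutant exactly when $T_{xy}=T_{g^{-1}x,\,g^{-1}y}$ for every $g\in G$ and every pair $(x,y)$; equivalently, the entries of $T$ are constant along the orbits of the diagonal $G$-action on $X\times X$. But those orbits are precisely the coherence classes $R_i$ of the group-case configuration, and a matrix is constant on each $R_i$ if and only if it is a linear combination $\sum_i\lambda_i A(R_i)$ of the orbit-indicator matrices. Hence $\mathrm{End}_G(C(X))$ coincides with the span of the $A(R_i)$, which is by definition the coherent algebra attached to the group case, giving the claimed identification.

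Finally I would check that the two descriptions agree as algebras and not merely as subspaces. Since the $R_i$ partition $X\times X$, the matrices $A(R_i)$ have pairwise disjoint supports, hence are linearly independent and form a basis of both sides, so the identification is a genuine equality inside $M_{|X|}(\bC)$; closure under ordinary matrix multiplication is automatic for a commutant, and indeed the integers $p_{ij}^k$ of \Cref{def.coh-conf} are exactly the structure constants expressing $A(R_i)A(R_j)$ in the basis $\{A(R_k)\}$. I expect no serious obstacle here: the entire argument is a classical identification of the commutant of a permutation representation with its orbit matrices, and the only real subtlety is convention-tracking—left versus right action and the placement of $g$ versus $g^{-1}$—which must be handled consistently so that the equivariance condition comes out as constancy along the \emph{diagonal} orbits rather than along some twisted variant.
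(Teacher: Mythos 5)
Your argument is correct and complete: identifying $\mathrm{End}_G(C(X))$ with the commutant of the permutation matrices $P_g$, computing $(P_gTP_g^{-1})_{xy}=T_{g^{-1}x,g^{-1}y}$, and concluding that the commutant is exactly the span of the orbit-indicator matrices $A(R_i)$ for the diagonal action is the standard proof of this fact. The paper itself offers no proof, deferring to the discussion in \cite[p.~213]{hig-ca}, and your computation is precisely the argument underlying that reference; your closing remark that the $p_{ij}^k$ are the structure constants of the basis $\{A(R_i)\}$ correctly accounts for the multiplicative compatibility.
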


It can be shown that the intersection of two coherent subalgebras of $M_n(\bC)$ is again a coherent algebra. If $X$ is the vertex set of a graph there is thus a smallest coherent algebra containing its adjacency matrix; it is customary to refer to it as the {\it coherent algebra of the graph} (see \cite[$\S$2.3]{lmr}).

By \Cref{pr.init}, the automorphism group of a finite graph $X$ is trivial if the coherent algebra of the graph is full (i.e. all of $M_{|X|}(\bC)$). The same principle is applied in \cite{lmr} in the context of actions of compact {\it quantum} groups on finite graphs. We recall the necessary background (e.g. \cite{Wor98,Bic03}) in \Cref{se.prel} below, providing only a broad outline here.

The quantum automorphism group of a graph is a typically non-commutative Hopf algebra $H$ coacting on the function algebra $C(X)$ of the vertex set. The coaction is required to be appropriately compatible with the graph structure (encoded into the adjacency matrix), and the Hopf algebra is regarded as an algebra of functions on the otherwise non-existent ``quantum group'' $G$.   

With this in place, \cite[Theorem 3.11]{lmr} is the quantum analogue of \Cref{pr.init}, and the observation made above, to the effect that full coherent algebras entail trivial automorphism groups, is applied in \cite[Theorem 3.14]{lmr} to argue that as $n$ increases and graph structures are selected uniformly and randomly on an $n$-element set $X$, the probability that the resulting graph has trivial quantum automorphism group approaches $1$. In short:

\begin{equation}
  \label{eq:princ}
  \text{``Most'' finite graphs have trivial quantum automorphism group.}
\end{equation}

The aim of the present paper is to make sense of and prove a continuous analogue of the previous sentence. The phrase `continuous' refers to substituting compact spaces $X$ for finite ones. Furthermore, thinking of a graph structure as a type of low-resolution distance function, we promote our spaces $X$ to compact metric spaces.

Compact quantum groups can act isometrically on compact metric spaces $X$ (see \Cref{se.prel} below). Every such quantum group will also preserve some probability measure on $X$. Since for infinite $X$ there is no canonical choice for such a measure (comparable to the uniform measure on a {\it finite} set $X$), we will also fix a probability measure on $X$ beforehand and focus on measure-preserving actions. All in all, the objects taking the place of finite graphs here are {\it compact metric measure spaces} $(X,d,\mu)$ (`measure' is understood as `probability measure' unless this convention is explicitly discarded). 

It is unclear how one would equip the collection $\bY$ of isomorphism classes of compact metric measure spaces with a probability measure that would then allow us to refer to ``most'' of them, so we instead equip it with a well-behaved topology. In that setup we say that most compact metric measure spaces have a property if the subset of $\bY$ where the property holds is {\it residual}: it contains some dense countable intersections of open sets. 

The rigidity result alluded to in the title can now be stated (see \Cref{th.triv-q}).

\begin{theorem}\label{th.main-init}
Let $\bY$ be the set of isomorphism classes of compact metric measure spaces equipped with the measured Hausdorff topology. Then, the subset consisting of objects with trivial compact quantum automorphism group is residual. 
\end{theorem}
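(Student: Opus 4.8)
The plan is to mirror the finite-graph argument of \cite{lmr} in the topological setting, replacing the discrete notion of ``generic'' (probability approaching $1$) with the Baire-category notion of residuality in the measured Hausdorff topology on $\bY$. The backbone is the analogue of \Cref{pr.init}: trivial quantum automorphism group should follow from the coherent algebra of the space being \emph{full}, i.e. as large as possible given the constraints imposed by the metric and measure. So the first task is to show that the set $\bY_{\mathrm{full}}\subseteq\bY$ of spaces with full coherent algebra is residual, and then to invoke the quantum version of \Cref{pr.init} to conclude that $\bY_{\mathrm{full}}$ is contained in the set of spaces with trivial compact quantum automorphism group.

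First I would establish the key containment, that fullness of the coherent algebra forces triviality of the quantum automorphism group. This is the continuous counterpart of \cite[Theorem 3.11]{lmr}: any isometric measure-preserving coaction of a compact quantum group must commute with the spectral data of the distance operator and hence preserve the coherent algebra; if that algebra is full (equivalently, the relevant commutant is trivial), the coaction is forced to be trivial. Here the coherent algebra of $(X,d,\mu)$ introduced earlier in the paper plays the role of the combinatorial coherent algebra of a graph, with the distance kernel $d\colon X\times X\to\bR$ replacing the adjacency matrix.

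Next I would prove that fullness is generic. The strategy is to exhibit $\bY_{\mathrm{full}}$ as a countable intersection of dense open sets. Density is the heart of the matter: given any compact metric measure space, I would perturb it—slightly wiggling the metric and redistributing mass—so that the distance kernel separates all pairs of points and breaks every potential symmetry, forcing the smallest coherent algebra containing $d$ to be all of the ambient endomorphism algebra. Such a perturbation should be arbitrarily small in the measured Hausdorff metric, giving density; and I would argue that fullness is stable under small perturbations by an appropriate approximation/semicontinuity argument, giving openness (possibly only after writing fullness as an increasing union of conditions at finite resolution and then intersecting). Residuality of $\bY_{\mathrm{full}}$ then follows from the Baire category theorem, provided $\bY$ is established to be a Baire space—typically by showing the measured Hausdorff topology is completely metrizable (Polish).

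The main obstacle I anticipate is the openness/stability half of the genericity argument. Unlike the finite case, where ``full coherent algebra'' is a condition on finitely many entries and is trivially robust, here fullness is an infinite-dimensional and a priori fragile condition: the coherent algebra is generated by a continuous kernel, and small metric perturbations could in principle collapse or enlarge it discontinuously. Making ``fullness'' open will likely require a careful reformulation in terms of finitely many separating functionals or a finite-dimensional reduction at each scale, together with a lower-semicontinuity statement for the coherent algebra under measured-Hausdorff convergence. Getting the interplay between the metric, the measure, and the spectral decomposition of the distance operator to behave well under limits is where I expect the real technical work to lie.
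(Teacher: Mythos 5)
Your plan follows the paper's proof essentially step for step: the paper first shows (via the intertwiner theorem for the convolution representation) that a full coherent algebra forces the isometric quantum action to be trivial, and then proves that fullness is residual by writing it as a countable intersection of sets $\bY_{N,m,p}$ defined by finite-resolution separation conditions on the convolution powers $d^{*n}$, with density coming from small perturbations of finite metric measure spaces and openness from a compactness argument. The ``finite-dimensional reduction at each scale'' you anticipate needing for openness is exactly the paper's device, concretely realized by requiring the kernels $d^{*n}$, $1\le n\le N$, to $\tfrac1p$-separate pairs of points at distance $\ge \tfrac1m$, after which Stone--Weierstrass yields fullness.
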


The paper is organized as follows.

\Cref{se.prel} collects some auxiliary material of use later. on both compact quantum groups and metric measure spaces. 

In \Cref{se.innttw} we prove the metric measure space analogue of \Cref{pr.init}. This is \Cref{cor.endg}, obtained as a consequence of \Cref{th.main}. 

\Cref{se.mms} contains the main results of the paper: \Cref{th.triv-coh} shows that the set of isomorphism classes of metric measure spaces with full coherent algebra is large in the sense of Baire category theory, while its consequence, \Cref{th.triv-q}, is the rigorous formulation of the ill-defined principle in \Cref{eq:princ}.

We also argue in \Cref{pr.lpl} that the Laplacian of a finite metric space belongs to its coherent algebra; this recovers the result of \cite[]{}, stating that a compact quantum group acting isometrically on a finite metric space preserves the underlying Laplacian (albeit for an inessentially different incarnation of the Laplacian).

\section{Preliminaries}
\label{se.prel}

\subsection{Compact quantum groups}
\label{subse.cqg}

For background the reader can consult, for instance, \cite{Wor98,KusTus99}, as well as, say, \cite{Tak02} for material on operator algebras. All of our operator algebras are unital, and tensor products between $C^*$ or von Neumann algebras are minimal unless specified otherwise. 

As sketched in the introduction, compact quantum groups $\bG$ are objects dual to certain $C^*$-algebras $C(\bG)$. We formalize this as follows.

\begin{definition}\label{def.cqg}
  A {\it compact quantum group} $\bG$ consists of a unital $C^*$-algebra $C(\bG)$ equipped with a $C^*$ morphism $\Delta:C(\bG)\to C(\bG)^{\otimes 2}$ satisfying the following properties:
  \begin{itemize}
  \item $\Delta$ is {\it coassociative}, in the sense that
    \begin{equation*}
      \begin{tikzpicture}[baseline=(current  bounding  box.center),anchor=base,cross line/.style={preaction={draw=white,-,line width=6pt}}]
    \path (0,0) node (1) {$C(\bG)$} +(3,.5) node (2) {$C(\bG)^{\otimes 2}$} +(6,0) node (3) {$C(\bG)^{\otimes 3}$} +(3,-.5) node (4) {$C(\bG)^{\otimes 2}$}; 
    \draw[->] (1) to[bend left=6] node[pos=.5,auto]{$\scriptstyle \Delta$} (2)  ;
    \draw[->] (2) to[bend left=6] node[pos=.5,auto] {$\scriptstyle \Delta\otimes\mathrm{id}$} (3);
    \draw[->] (1) to[bend right=6] node[pos=.5,auto,swap] {$\scriptstyle \Delta$} (4);
    \draw[->] (4) to[bend right=6] node[pos=.5,auto,swap] {$\scriptstyle \mathrm{id}\otimes\Delta$} (3);    
  \end{tikzpicture}
\end{equation*}
    commutes, and
  \item $\Delta(A)(A\otimes 1)$ and $\Delta(A)(1\otimes A)$ are dense in $A\otimes A$.
  \end{itemize}  
\end{definition}
$C(\bG)$ always has a unique dense Hopf $*$-subalgebra in the sense of \cite[Definition 2.1]{KusTus99}: see \cite[Theorem 3.1.7]{KusTus99}. 

We also need the notion of a unitary representation of a compact quantum group on a Hilbert space (\cite[$\S$3]{Wor98} or \cite[$\S$3.2]{KusTus99}).  

\begin{definition}\label{def.urep}
  A {\it unitary representation} of a compact quantum group $\bG$ on a Hilbert space $\cH$ is a unitary element $U\in \cM(\cK(L^2)\otimes C(\bG))$ satisfying $(\mathrm{id}\otimes \Delta)U=U_{12}U_{13}$,  where
\begin{itemize}
\item $\cK(-)$ denotes compact operators;
\item $\cM(-)$ denotes the multiplier algebra of a non-unital $C^*$-algebra;
\item the ``leg-numbering'' notation $U_{ij}$ indicates the operator $U$ acting on the $i^{th}$ and $j^{th}$ tensorands.  
\end{itemize}
\end{definition}

\subsection{Actions on spaces}
\label{subse.act}

Let $(X,d)$ be a compact metric space, and $\bG$ a compact quantum group acting isometrically on it in the sense of \cite[Definition 3.1]{gos-mtrc}. We will follow the notation of \cite{gos-mtrc}. The coaction consists of a map
\begin{equation*}
  \beta: C(X)\to C(X)\otimes C(\bG). 
\end{equation*}
We will also sometimes consider the purely algebraic counterpart of the picture, whereby the unique dense Hopf $*$-subalgebra $H$ of $C(\bG)$ is substituted for the latter, and the coaction restricts to
\begin{equation*}
  \beta:A\to A\otimes H
\end{equation*}
for a dense $*$-subalgebra $A\subset C(X)$.  

As explained in the discussion on \cite[p. 344]{gos-mtrc}, there is a faithful probability measure $\mu$ on $X$ invariant under $\beta$ in the sense that for every function $f\in C(X)$ we have
\begin{equation*}
  (\mu\otimes \mathrm{id})\beta(f) = \mu(f):=\int_X f\ \mathrm{d}\mu. 
\end{equation*}
We will use a particular measure with these properties throughout the note, and we shorten $L^2(X,\mu)$ to $L^2(X)$ or even $L^2$; similarly, $L^2(X\times X)$ stands for $L^2(X\times X,\mu\boxtimes\mu)$. 

The space $L^2(X)$ is a representation of the compact quantum group, described by a unitary operator $U\in \cM(\cK(L^2)\otimes C(\bG))$, as in \Cref{def.urep}.

As explained on \cite[p. 344]{gos-mtrc}, the condition that the action be isometric is equivalent to either of the equalities
\begin{equation}\label{eq:l}
  U_{13}U_{23}(d\otimes 1) = d\otimes 1 = U_{23}U_{13}(d\otimes 1)
\end{equation}
where $d\otimes 1$ is regarded, say, as an element of $L^2(X\times X)\otimes L^2(\bG)$ (the last tensorand being the GNS space of the Haar measure on $\bG$).

Equivalently, this condition can be phrased as
\begin{equation}\label{eq:lr}
 W(\pi^{(2)}(d)\otimes 1)W^* = \pi^{(2)}(d)\otimes 1 = Z(\pi^{(2)}(d)\otimes 1)Z^*,
\end{equation}
where $W=U_{13}U_{23}$, $Z=U_{23}U_{13}$ and
\begin{equation*}
  \pi^{(2)}=\pi\otimes\pi : C(X\times X)\to \cB(L^2(X\times X))
\end{equation*}
is the tensor-squared GNS representation of $\mu$. 

Note that the set of all continuous functions satisfying \Cref{eq:lr} is a subalgebra of $C(X\times X)$, so is the function algebra of a quotient space $X\times X\to O$. We use this observation as a vehicle for

\begin{definition}\label{def.orb}
  The {\it orbital algebra} associated (or attached) to the action $\beta$ is the subalgebra $C(O)$ of $C(X\times X)$ consisting of functions $f\in C(X\times X)$ which satisfy \Cref{eq:lr} upon substituting $f$ for $d$. 
  
  The {\it orbitals} of the action $\beta$ are the points of the spectrum $X\times X\to O$ of the orbital algebra. 
\end{definition}

\begin{remark}
As noted above, it follows from the discussion on \cite[p. 344]{gos-mtrc} that for any function $f\in C(X\times X)$ the four equalities in \Cref{eq:l,eq:lr} are mutually equivalent. This, together with \cite[Lemma 3.8]{lmr}, also shows that for finite metric spaces $X$ the present notion of orbital coincides with that of \cite[Definition 3.5]{lmr}. 
\end{remark}

\subsection{Metric measure spaces}
\label{subse.mm}

Throughout this note the phrase `metric measure space' refers to a compact metric space $(X,d)$ equipped with a faithful probability measure $\mu$. For such a space we consider two multiplicative structures on the space $C(X\times X)$ of continuous functions: the standard multiplication of functions denoted by juxtaposition, and the convolution by $\mu$ denoted by $*$ and used above:
\begin{equation*}
  (f*g)(x,z) = \int_X f(x,y)g(y,z)\ \mathrm{d}\mu(z). 
\end{equation*}
We denote the $n^{th}$ power of a function $f$ under convolution by $f^{*n}$.

We will now topologize the collection $\bY$ of (the isomorphism classes of) all metric measure spaces adapting the notion of {\it measured Hausdorff topology} from \cite[Definition 0.2]{fuk} to the present setting so as to take into account both structures (metric and measure-theoretic). In order to do this, we need some preparation.

Recall (e.g. \cite[Definition 7.3.27]{bbi})

\begin{definition}\label{def.eiso}
  For $\varepsilon>0$ and {\it $\varepsilon$-isometry} $f:X\to Y$ between metric spaces $(X,d_X)$ and $(Y,d_Y)$ is a possibly discontinuous function such that
  \begin{itemize}
  \item the $\varepsilon$-neighborhood of $f(X)$ is all of $Y$, and
  \item $|d_Y(f(x),f(x'))-d_X(x,x')|\le \varepsilon$ for all $x,x'\in X$.    
  \end{itemize}
\end{definition}
We can define a quasimetric on $\bY$ (i.e. distance function satisfying all of the usual axioms except perhaps for symmetry) by
\begin{equation}\label{eq:dd}
  D_d((X,d_X),(Y,d_Y)) = \inf\{\varepsilon>0\ |\ \text{ there exists an }\varepsilon-\text{isometry }X\to Y\}. 
\end{equation}
$D_d$ is equivalent to the Gromov-Hausdorff distance (e.g. by \cite[Corollary 7.3.28]{bbi}) and hence induces the same topology on $\bY$. 

We will use a finer topology on $\bY$, that also carries measure-theoretic information. Given metric measure spaces $(X,d_X,\mu_X)$ and $(Y,d_Y,\mu_Y)$ and an $\varepsilon$-isometry $f:X\to Y$ we will need to compare the probability measures $f_*\mu_X$ and $\mu_Y$ on $Y$. We will use the weak$^*$ topology on $\mathrm{Prob}(Y)$ for this purpose, in the sense that a net $\mu_\alpha$ converges to $\mu$ if 
\begin{equation*}
  \mu_\alpha(\varphi)\to \mu(\varphi)
\end{equation*}
for all continuous functions $f\in C(Y)$. This topology is metrizable in numerous ways, e.g. via the so-called {\it Wasserstein distances} \cite[Definition 6.1]{Vil09} $W_p$ defined (on $X$, for $p\ge 1$) by
\begin{equation*}
  W_p(\mu,\nu) = \inf_\pi \left(\int_{X\times X}d(x,x')^p\ \mathrm{d}\pi(x,x') \right)^{\frac 1p},
\end{equation*}
where $\pi$ ranges over the {\it $(\mu,\nu)$-couplings}: those probability measures on $X\times X$ whose two marginals are $\mu$ and $\nu$ respectively. Any $p\ge 1$ will do for our purposes, but we retain flexibility by incorporating $p$ into the definition of he following quasimetric.  

\begin{definition}\label{def.Dp}
  Let $(X,d_X,\mu_X)$ and $(Y,d_Y,\mu_Y)$ be elements of $\bY$, and $p\ge 1$. We then write $D_p(X,Y)$ for the infimum of the set of those $t>0$ for which there exists a measurable $t$-isometry $f:X\to Y$ such that
  \begin{equation*}
    W_p(f_*\mu_X,\mu_Y)\le t.
  \end{equation*}  
\end{definition}

We topologize $\bY$ by defining the neighborhoods of $X\in \bY$ to be those sets that contain some
\begin{equation*}
  U_{\varepsilon,X}=\{Y\in \bY\ |\ D_p(X,Y)<\varepsilon\}
\end{equation*}
for some $\varepsilon>0$. The choice of $p\ge 1$ makes no difference here. $\bY$ is a Baire space in the sense that it satisfies Baire's theorem: countable intersections of open dense sets are again dense. We record this here for future use. 

\begin{lemma}\label{le.baire}
  $\bY$ topologized via any of the quasimetrics $D_p$ is a Baire space.  
\end{lemma}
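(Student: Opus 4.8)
The plan is to reduce the statement to the classical Baire Category Theorem by exhibiting a genuine \emph{complete} metric on $\bY$ that induces the same topology as the neighborhood basis $\{U_{\varepsilon,X}\}$. Since $D_p$ is only a quasimetric, the first task is to symmetrize it. I would set $\rho(X,Y):=\max\{D_p(X,Y),\,D_p(Y,X)\}$, show that $\rho$ is a bona fide metric on $\bY$, and check that the $\rho$-balls $\{Y:\rho(X,Y)<\varepsilon\}$ form a neighborhood basis cofinal with the sets $U_{\varepsilon,X}$, so that $\rho$ metrizes the given topology.

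For the metric axioms, the triangle inequality follows by composing approximate isometries: an $s$-isometry $X\to Y$ and a $t$-isometry $Y\to Z$ compose to an $(s+t+O(s))$-isometry $X\to Z$, the Wasserstein term being controlled by the triangle inequality for $W_p$ together with the elementary fact that pushforward along an $\varepsilon$-isometry distorts $W_p$ by at most $O(\varepsilon)$. Separation---that $\rho(X,Y)=0$ forces $(X,d_X,\mu_X)\cong(Y,d_Y,\mu_Y)$---follows by extracting from a sequence of $\varepsilon_n$-isometries with $\varepsilon_n\to 0$ a measure-preserving isometry in the limit, via an Arzel\`a--Ascoli/diagonal argument over a countable dense set. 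Topological equivalence of $\rho$ with the one-sided $D_p$ rests on the standard fact (\cite[Corollary 7.3.28]{bbi}) that an $\varepsilon$-isometry $X\to Y$ forces the existence of a $3\varepsilon$-isometry $Y\to X$; combined with the symmetric $O(\varepsilon)$ control of the Wasserstein term this bounds $D_p(Y,X)$ in terms of $D_p(X,Y)$ up to slack vanishing with $\varepsilon$, so the two families of balls are cofinal.

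The crux is completeness of $(\bY,\rho)$, and this is the step I expect to be the main obstacle. Given a $\rho$-Cauchy sequence I would pass to a subsequence with $\rho(X_n,X_{n+1})<2^{-n}$, fix $2^{-n}$-isometries $f_n:X_n\to X_{n+1}$ realizing these distances, and glue the $X_n$ into a single metric space by identifying $x$ with $f_n(x)$ up to error $2^{-n}$; its metric completion $\widehat Z$ contains the closure $X_\infty$ of the tail of the images, which I would argue is compact because total boundedness propagates along the uniform approximate isometries. Viewed inside $\mathrm{Prob}(\widehat Z)$, the pushforward measures form a $W_p$-Cauchy sequence of uniformly bounded support, hence converge to a limit $\mu_\infty$ supported on $X_\infty$ by completeness of the Wasserstein space over a Polish base \cite{Vil09}. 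One then verifies $(X_\infty,\mu_\infty)\in\bY$ and $\rho(X_n,X_\infty)\to 0$; the delicate points are arranging the gluing so that the limit is genuinely compact and checking that the metric and measure limits are mutually compatible. The entire paragraph can alternatively be replaced by a citation, since the Gromov--Hausdorff--Wasserstein space---equivalently, Fukaya's measured Hausdorff space \cite{fuk}---of compact metric measure spaces is known to be Polish, hence completely metrizable.

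With a complete metric $\rho$ inducing the topology in hand, the Baire Category Theorem applies verbatim: in a complete metric space every countable intersection of dense open sets is dense. This is precisely the assertion of the lemma.
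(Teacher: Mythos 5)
Your proposal follows essentially the same route as the paper: symmetrize the quasimetric into a genuine metric inducing the same topology (the paper uses $D+D^t$ rather than the max, which is equivalent), establish completeness, and invoke the Baire category theorem. The paper is much terser on the completeness step --- it simply cites completeness of the Gromov--Hausdorff and Wasserstein distances --- whereas you spell out the gluing construction, but the strategy is identical.
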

\begin{proof}
  The proof of \cite[Corollary 7.3.28]{bbi} to the effect that the quasimetric $D_d$ of \Cref{eq:dd} induces the Gromov-Hausdorff topology and hence is equivalent to its transpose
  \begin{equation*}
    D_d^t(X,Y) = D_d(Y,X)
  \end{equation*}
  extends to the present setting to show that the topological space $\bY$ can be metrized by $D+D^t$. By Baire's theorem, it thus suffices to argue that $D+D^t$ is complete.

  In turn, this last claim follows from the fact that convergence with respect to $D+D^t$ dominates the Gromov-Hausdorff distance (e.g. \cite[Corollary 7.3.28]{bbi}), together with the fact that the Gromov-Hausdorff distance and $W_p$ are both complete.
\end{proof}

\begin{remark}\label{re.evans-winter}
  The same topology, restricted to compact {\it $\bR$-trees}, is considered in \cite{ew-metric} and metrized in essentially the same fashion. \cite[Theorem 2.5]{ew-metric} similarly proves that the resulting metric space is complete and separable.
\end{remark}

Due to \Cref{le.baire}, any subset of $\bY$ containing a dense $G_\delta$ is appropriately thought of as ``large''. We will use the term {\it residual} for such sets. See e.g. \cite{oxt} for some background on this and ancillary concepts,  and parallels to measure-theoretic notions of largeness; chapter 9 therein contains the definition of residual sets, in agreement with the present one.

\section{Intertwiners}\label{se.innttw}

In the above discussion we regarded $C(X\times X)$ as an algebra of operators on $L^2(X\times X)$ via the GNS representation. We now consider its representation
\begin{equation*}
  \rho: C(X\times X)\to \cB(L^2(X))
\end{equation*}
by convolution through $\mu$:
\begin{equation*}
  (\rho(f)(g))(x) = \int_X f(x,y)g(y)\ \mathrm{d}\mu(y)
\end{equation*}
for $f(x,y)\in C(X\times X)$ and $g(y)\in L^2(X)$. This is analogous to the action of the $X\times X$-indexed matrix algebra on the space of functions on $X$ for finite $X$. 

Recall also that an {\it intertwiner} for the representation $U$ of $\bG$ on $L^2(X)$ is an element $t$ of $\cB(L^2(X))$ satisfying $Ut=tU$ if we regard $t$ as an element of $\cM(\cK(L^2)\otimes C(\bG))$ by
\begin{equation*}
\cB(L^2(X))\cong \cM(\cK(L^2(X)))\to \cM(\cK(L^2)\otimes C(\bG)).  
\end{equation*}
The intertwiners of $U$ form a von Neumann subalgebra $\mathrm{End}_G(L^2(X))$ of $\cB(L^2(X))$. Since furthermore every representation of $\bG$ on a Hilbert space decomposes as a direct sum of finite-dimensional irreducible representations, we have
\begin{equation*}
  L^2(X) \cong \bigoplus_i H_i\otimes V_i
\end{equation*}
as $\bG$-representations, where $V_i$ are irreducible (and hence finite-dimensional) and $H_i$ are multiplicity spaces, i.e. carry the trivial $\bG$-representation. This in turn entails
\begin{equation*}
  \mathrm{End}_G(L^2(X))\cong \prod_i \cB(H_i),
\end{equation*}
where the product is in the category of von Neumann algebras. The embedding of this product into $\cB(L^2(X))$ via the standard inclusion
\begin{equation*}
  \mathrm{End}_G(L^2(X))\subset \cB(L^2(X))
\end{equation*}
is such that the minimal projections of $\cB(H_i)$ have respective rank $\mathrm{dim}(V_i)$. We will use this observation below. 

The goal of this section is to prove the following partial analogue of \cite[Theorem 3.11]{lmr}.

\begin{theorem}\label{th.main}
  For a function $f\in C(X\times X)$, $\rho(f)$ is an intertwiner for $U$ if and only if $f$ belongs to the orbital algebra $C(O)$. 
\end{theorem}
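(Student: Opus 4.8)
The plan is to characterize the intertwiner condition $U\rho(f)=\rho(f)U$ in terms of the leg-numbering relations defining the orbital algebra, and to show these two conditions coincide. Recall that $\rho(f)$ acts on $L^2(X)$ by convolution, so $\rho(f)$ is, up to the identification $\cB(L^2(X))\cong \cM(\cK(L^2(X)))$, essentially ``multiplication by the kernel $f$'' composed with integration against $\mu$. The key structural input is that the coaction $\beta$ implements $U$ on $L^2(X)$, and that the tensor-squared representation $U_{13}U_{23}$ (resp.\ $U_{23}U_{13}$) on $L^2(X\times X)$ governs the orbital relations \Cref{eq:lr}. So the natural first step is to translate the operator identity $U\rho(f)=\rho(f)U$ on the single factor $L^2(X)$ into an identity about the kernel $f\in C(X\times X)$ viewed in the two-variable picture.

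First I would set up the dictionary between the two pictures. On $L^2(X)$ the convolution operator $\rho(f)$ can be written as $(\mathrm{id}_{L^2}\otimes \mu)\circ(\text{multiplication by }f)$ after identifying $L^2(X)\otimes L^2(X)$ appropriately; the point is that convolution by a kernel is the composite of the two-variable multiplication operator $\pi^{(2)}(f)$ with a partial integration via the invariant measure $\mu$. Because $\mu$ is $\beta$-invariant, the partial-integration map $(\mathrm{id}\otimes\mu)$ intertwines the relevant representations, so the commutation of $\rho(f)$ with $U$ can be pushed to a commutation statement for $\pi^{(2)}(f)$ with the two-legged operators $W=U_{13}U_{23}$ and $Z=U_{23}U_{13}$. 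Concretely I would aim to show that $U\rho(f)=\rho(f)U$ is equivalent to $W(\pi^{(2)}(f)\otimes 1)=(\pi^{(2)}(f)\otimes 1)W$ together with the analogous identity for $Z$ — but one must be careful about \emph{which} of the two legs the convolution integrates out, since $\rho$ integrates the ``inner'' variable $y$ while the orbital relations \Cref{eq:lr} are symmetric conjugation conditions.

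The main obstacle, and the step that needs genuine care rather than routine bookkeeping, is bridging the gap between a \emph{conjugation} condition $W(\pi^{(2)}(f)\otimes 1)W^*=\pi^{(2)}(f)\otimes 1$ (the form in which membership in $C(O)$ is defined) and a \emph{commutation} condition $U\rho(f)=\rho(f)U$ on the single factor. Since $W$ and $Z$ are unitary, conjugation invariance is equivalent to commutation with $\pi^{(2)}(f)\otimes 1$; the real content is that commutation of the single-variable convolution operator with $U$ is equivalent to commutation of the two-variable multiplication operator with $W$ and $Z$ \emph{simultaneously}. I expect one direction (orbital algebra membership $\Rightarrow$ intertwiner) to be the easier implication, obtained by directly integrating out a leg in \Cref{eq:lr} and using $\beta$-invariance of $\mu$. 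The converse (intertwiner $\Rightarrow$ orbital) is where I anticipate the difficulty: from commutation with the single operator $U$ one must recover commutation with \emph{both} $W$ and $Z$, which should follow from the multiplicativity/coassociativity relation $(\mathrm{id}\otimes\Delta)U=U_{12}U_{13}$ unwound in the two-variable GNS picture, but extracting the full kernel identity from the integrated one requires exploiting faithfulness of $\mu$ (so that the kernel is determined by its convolution action) and the density of the Hopf $*$-subalgebra.

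Finally, I would assemble these pieces: granting both implications between the single-factor and two-factor commutation statements, membership of $f$ in $C(O)$ is by \Cref{def.orb} exactly the pair of conjugation identities \Cref{eq:lr}, which by unitarity of $W,Z$ are the pair of commutation identities, which in turn are equivalent to $\rho(f)$ being an intertwiner. I would phrase the proof so that each equivalence is an ``if and only if'', letting the two implications compose into the stated biconditional. The faithfulness of $\mu$ is what guarantees no information is lost when passing from kernels to their convolution operators, and this is the hypothesis I would flag as doing the essential work in the harder direction.
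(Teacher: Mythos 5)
Your plan follows essentially the same route as the paper's proof: writing $\rho(f)$ as partial integration (against $\mu$, on the middle leg) of the two-variable kernel picture, using $\beta$-invariance of $\mu$ for the implication ($f\in C(O)$ $\Rightarrow$ intertwiner), and using faithfulness of $\mu$ together with the density condition that elements $\beta(g)(1\otimes t)$ exhaust $A\otimes H$ to recover the full kernel identity from the integrated one in the converse. The one difficulty you flag that is not actually present is the need to recover commutation with \emph{both} $W$ and $Z$ from the single relation $U\rho(f)=\rho(f)U$: as the paper's remark (following the discussion in Goswami) records, the four identities in \Cref{eq:l,eq:lr} are mutually equivalent for any $f\in C(X\times X)$, so it suffices to establish the single identity $U_{13}^{-1}(f\otimes 1)=U_{23}(f\otimes 1)$, which is exactly what the two directions of the paper's argument produce.
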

\begin{proof}
  We prove the two implications separately.

  \vspace{.5cm}

  {\bf ($\Leftarrow$)} Suppose first that $f$ belongs to the orbital algebra, i.e. the equivalent conditions \Cref{eq:l,eq:lr} hold with $f$ in place of $d$. We'll want to verify that
  \begin{equation}\label{eq:conj}
    \rho(f) = U^{-1}\rho(f) U. 
  \end{equation}
Operating with the right hand side of \Cref{eq:conj} on the function $g\in C(X)$ is equivalent to applying $\mu$ to the middle leg of 
\begin{equation}\label{eq:fbg}
  U^{-1}_{13}(f\otimes 1)(1\otimes \beta(g)).
\end{equation}
The left hand equality in \Cref{eq:l} ensures that $U^{-1}_{13}(f\otimes 1)=U_{23}(f\otimes 1)$. If we write $f_x(y)=f(x,y)$, this is an $x$-dependent element of $C(X)\otimes C(\bG)$ given by
\begin{equation*}
  x\mapsto \beta(f_x). 
\end{equation*}
In conclusion, the right hand side of \Cref{eq:fbg} is the $x$-dependent element of $C(X)\otimes C(\bG)$
\begin{equation*}
  x\mapsto \beta(f_x)\beta(g) = \beta(f_xg). 
\end{equation*}
Applying $\mu$ to the left hand tensorand of the right hand side in this equation produces
\begin{equation*}
  x\mapsto (\mu\otimes 1)\beta(f_xg) = x\mapsto \mu(f_xg)
\end{equation*}
by the $\beta$-invariance of $\mu$. This, however, is nothing but the result of operating with $\rho(f)$ on $g$ (by the very definition of $\rho$).

  \vspace{.5cm}

  {\bf ($\Rightarrow$)} Conversely, suppose
  \begin{equation*}
    U\rho(f) = \rho(f)U. 
  \end{equation*}
  We can now reverse the argument above. The hypothesis is that for every $g\in C(X)$, the results of operating on $g$ with $\rho(f)$ and $U^{-1}\rho(f)U$ coincide. As before, application of the latter amounts to evaluating $\mu$ against the middle tensorand of \Cref{eq:fbg}. On the other hand, applying the former means evaluating $\mu$ on the middle leg of
  \begin{equation}\label{eq:fbg2}
    U_{23}(f\otimes 1)(1\otimes\beta(g))
  \end{equation}
  instead. Since these results are moreover unaffected by further multiplying the rightmost tensorand on the right by arbitrary $t\in C(\bG)$, the rightmost factors in \Cref{eq:fbg,eq:fbg2} can be replaced by arbitrary $1\otimes(\beta(g)(1\otimes t))$. Since every element of $A\otimes H$ is of the form $\beta(g)(1\otimes t)$, we may as well assume that the right hand factors in \Cref{eq:fbg,eq:fbg2} are arbitrary tensor products $1\otimes g\otimes t$ (or indeed simply $1\otimes g\otimes 1$).

The conclusion now follows from the fact that $\mu$ is faithful, and hence, if $a=U_{13}^{-1}(f\otimes 1)$ and $a'=U_{23}(f\otimes 1)$ denote the left hand factors of \Cref{eq:fbg,eq:fbg2} respectively, the vanishing of all
  \begin{equation*}
    (1\otimes\mu\otimes 1)((a-a')(1\otimes g\otimes 1))
  \end{equation*}
  for arbitrary $g\in C(X)$ entails the vanishing of $a-a'$. In other words, as claimed, $f$ satisfies \Cref{eq:l}.
\end{proof}

\cite[Theorem 3.11]{lmr} in fact identifies the entire algebra $\mathrm{End}_G(L^2(X))$ with the orbital algebra $C(O)$ (see also \cite[$\S$7]{hig-cc1} for the case of ordinary as opposed to quantum permutation groups). On the other hand, \Cref{th.main} only realizes $C(O)$ as a subalgebra of $\mathrm{End}_G(L^2(X))$; for this reason, the analogy thus far is only partial. The inclusion
\begin{equation*}
 C(O)\subseteq \mathrm{End} 
\end{equation*}
is indeed proper when $X$ is infinite, but only for the simple reason that the larger algebra is von Neumann, while the smaller one is only a $C^*$-algebra of continuous functions. The two differ in just this obvious fashion though:

\begin{corollary}\label{cor.endg}
  The von Neumann closure of $C(O)$ in $\cB(L^2(X))$ is $\mathrm{End}_G(L^2(X))$. 
\end{corollary}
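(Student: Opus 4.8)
The plan is to realize both $C(O)$ and the full $\mathrm{End}_G(L^2(X))$ as images of one normal conditional expectation and then push a density statement through it. Write $\cM:=\mathrm{End}_G(L^2(X))$ and let $h$ be the Haar state of $\bG$. Averaging over $\bG$ produces a normal, unital, completely positive conditional expectation
\[
\bE:\cB(L^2(X))\to\cM,\qquad \bE(T)=(\mathrm{id}\otimes h)\big(U(T\otimes 1)U^*\big),
\]
which restricts to the identity on the intertwiner algebra $\cM$ (if $T$ commutes with $U$ then $U(T\otimes 1)U^*=T\otimes 1$, so $\bE(T)=T$). On the function side, the diagonal coaction $\gamma:C(X\times X)\to C(X\times X)\otimes C(\bG)$, i.e. the coaction implemented on $L^2(X\times X)$ by $W=U_{13}U_{23}$ via $W(\pi^{(2)}(f)\otimes 1)W^*=(\pi^{(2)}\otimes\mathrm{id})\gamma(f)$, has $C(O)$ as its fixed-point algebra: this is exactly the content of \Cref{eq:lr} together with the equivalence of the four conditions in \Cref{eq:l,eq:lr} recorded after \Cref{def.orb}. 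Consequently $P:=(\mathrm{id}\otimes h)\gamma$ is the canonical conditional expectation of $C(X\times X)$ onto $C(O)$, so that $P\big(C(X\times X)\big)=C(O)$.

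The technical heart of the argument is the equivariance identity
\[
U\big(\rho(f)\otimes 1\big)U^* = (\rho\otimes\mathrm{id})\gamma(f),\qquad f\in C(X\times X),
\]
expressing that $\rho$ intertwines $\gamma$ with conjugation by $U$. This is precisely the computation already carried out in the proof of \Cref{th.main}, performed without the simplifying assumption $f\in C(O)$ and without collapsing the $C(\bG)$-leg against $\mu$ in the final step; the manipulation $U^{-1}_{13}(f\otimes 1)=U_{23}(f\otimes 1)$ and the $\beta$-invariance of $\mu$ are simply retained in their $\bG$-valued form. Granting this identity and slicing by $h$ (using $(\mathrm{id}\otimes h)\circ(\rho\otimes\mathrm{id})=\rho\circ(\mathrm{id}\otimes h)$, as $\rho$ and $h$ act on different legs) yields the compatibility
\[
\bE\circ\rho=\rho\circ P,\qquad\text{hence}\qquad \bE\big(\rho(C(X\times X))\big)=\rho(C(O)).
\]

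With these pieces in place the corollary follows from a double-commutant argument. The operators $\rho(\varphi\otimes\psi)$ are the rank-one operators $\ket{\varphi}\bra{\overline\psi}$ with $\varphi,\psi\in C(X)$; since $C(X)$ is dense in $L^2(X)$ these span an operator-norm dense set of finite-rank operators, so the nondegenerate $*$-algebra $\rho(C(X\times X))$ has double commutant $\rho(C(X\times X))''=\cB(L^2(X))$. By \Cref{th.main} we have $\rho(C(O))\subseteq\cM$, whence $\rho(C(O))''\subseteq\cM$. For the reverse inclusion, normality (i.e. $\sigma$-weak continuity) of $\bE$ gives
\[
\cM=\bE\big(\cB(L^2(X))\big)=\bE\big(\overline{\rho(C(X\times X))}\big)\subseteq\overline{\bE\big(\rho(C(X\times X))\big)}=\overline{\rho(C(O))}=\rho(C(O))'',
\]
all closures being $\sigma$-weak, equivalently the double commutants of the relevant $*$-algebras. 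Hence $\rho(C(O))''=\cM=\mathrm{End}_G(L^2(X))$, which is the assertion. The main obstacle is the equivariance identity of the second paragraph: it is conceptually immediate from \Cref{th.main} but demands careful leg-numbering bookkeeping, and one must separately confirm that $\bE$ is genuinely a normal conditional expectation onto the intertwiner algebra (standard, via the right-invariance of $h$ and $(\mathrm{id}\otimes\Delta)U=U_{12}U_{13}$).
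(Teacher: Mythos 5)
Your argument is correct, but it is a genuinely different route from the paper's. The paper never introduces a conditional expectation: it first observes that the computation in \Cref{th.main} applies verbatim to $L^2$-kernels, so that the Hilbert--Schmidt elements of $\mathrm{End}_G(L^2(X))$ are exactly the $L^2$-functions constant on the fibers of $X\times X\to O$ (i.e.\ the $L^2$-closure of $C(O)$), and then invokes the structure theory already set up in \Cref{se.innttw} --- namely $L^2(X)\cong\bigoplus_i H_i\otimes V_i$ with $\dim V_i<\infty$, whence $\mathrm{End}_G(L^2(X))\cong\prod_i\cB(H_i)$ is generated as a von Neumann algebra by finite-rank, hence Hilbert--Schmidt, intertwiners. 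Your proof replaces both ingredients by the Haar-averaging expectations $\bE$ and $P$ together with the equivariance $U(\rho(f)\otimes 1)U^*=(\rho\otimes\mathrm{id})\gamma(f)$, and then pushes the $\sigma$-weak density of $\rho(C(X\times X))$ in $\cB(L^2(X))$ through the normal map $\bE$. The pieces you flag as needing verification do check out: $\bE$ is a normal conditional expectation onto the commutant of $U$ by the standard computation with $(\mathrm{id}\otimes\Delta)U=U_{12}U_{13}$ and invariance of $h$, and the equivariance identity can be confirmed on rank-one operators, since $\rho(\varphi\otimes\psi)=\pi(\varphi)\,|1\rangle\langle 1|\,\pi(\psi)$ with $|1\rangle\langle 1|\otimes 1$ fixed under conjugation by $U$ (the constant function spans a trivial subrepresentation), which gives exactly $(\rho\otimes\mathrm{id})\gamma(\varphi\otimes\psi)$. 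What each approach buys: yours avoids extending \Cref{th.main} to $L^2$-kernels and avoids any appeal to the Peter--Weyl decomposition, at the cost of the leg-numbering bookkeeping for $\bE\circ\rho=\rho\circ P$ and of justifying that the fixed-point algebra of $\gamma$ is precisely $C(O)$ (which does follow from the equivalence of the four conditions in \Cref{eq:l} and \Cref{eq:lr} noted after \Cref{def.orb}); the paper's route is shorter given that the decomposition of $\mathrm{End}_G(L^2(X))$ with finite-rank minimal projections was already recorded in the preliminaries. One cosmetic caution: writing the $\sigma$-weak closure of $\rho(C(O))$ as a double commutant presupposes nondegeneracy of that $*$-algebra, which you only obtain a posteriori; it is cleaner to phrase the final chain purely in terms of $\sigma$-weak closures, as your displayed inclusion in fact already does.
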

\begin{proof}
  The argument in the proof of \Cref{th.main} applies to functions in $L^2(X\times X)$ to argue that they too belong to $\mathrm{End}_G(L^2(X))$ precisely when they are constant along the fibers of $X\times X\to O$.

  On the other hand, recall from the discussion preceding \Cref{th.main} that $\mathrm{End}_G(L^2(X))$ is generated as a von Neumann algebra by finite-rank operators. The latter are contained in the algebra
  \begin{equation*}
    L^2(X\times X)\cong L^2(X)\otimes L^2(X)
  \end{equation*}
  of Hilbert-Schmidt operators on $L^2(X)$, meaning that $\mathrm{End}_G(L^2(X))$ is the von Neumann closure of its subalgebra of Hilbert-Schmidt operators. By the first paragraph of the present proof, this coincides with the $W^*$ closure of $C(O)$.
\end{proof}

\section{Coherent algebras for metric measure spaces}
\label{se.mms}

Following the discussion in \cite[$\S$2.3]{lmr} in the context of finite graphs, we introduce the following object.

\begin{definition}\label{def.coh}
  The {\it coherent algebra} $\coh(X)$ of a metric measure space $(X,d,\mu)$ is the smallest unital $C^*$-subalgebra (with respect to the standard algebra structure) of $C(X\times X)$ meeting the following requirements
  \begin{itemize}
  \item it contains $d$;    
  \item it is closed under the convolution multiplication $*$;    
  \item it is closed under flips: if $f\in \coh(X)$ then so is the function $(x,y)\mapsto f(y,x)$. 
  \end{itemize}
\end{definition}

\begin{remark}
  For a finite graph, its coherent algebra as recalled in \cite[$\S$2.3]{lmr} coincides with $\coh(X)$ in the sense of \Cref{def.coh} when $X$ is equipped with its path metric and the uniform probability measure (i.e. the rescaled counting measure). 
\end{remark}

Now consider the setup of the previous section, of a compact quantum group acting isometrically (and faithfully) on $(X,d,\mu)\in \bY$. In the language of \Cref{th.main} $\rho(d)\in \cB(L^2(X))$ is an intertwiner for $U$, and hence the coherent algebra $\coh(A)$ is a subalgebra of $C(O)$. In conclusion:

\begin{lemma}\label{le.triv-orb}
  If the coherent algebra of $(X,d,\mu)$ equals all of $C(X\times X)$ then $(X,d,\mu)$ has trivial quantum automorphism group. 
\end{lemma}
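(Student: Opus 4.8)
The plan is to take $\bG$ to be the quantum automorphism group of $(X,d,\mu)$ itself, equipped with its canonical faithful isometric measure-preserving coaction $\beta$, and to show that the hypothesis $\coh(X)=C(X\times X)$ forces $C(\bG)=\bC$. The reasoning passes through the orbital algebra $C(O)$ of \Cref{def.orb} and the intertwiner algebra $\mathrm{End}_G(L^2(X))$ of the associated representation $U$ on $L^2(X)$. The first observation, already recorded just above the statement, is that $\coh(X)\subseteq C(O)$: the algebra $C(O)$ is a unital $C^*$-subalgebra of $C(X\times X)$ containing $d$, and by \Cref{th.main} it consists precisely of those $f$ with $\rho(f)\in\mathrm{End}_G(L^2(X))$. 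Since $\rho$ carries convolution to operator composition ($\rho(f*g)=\rho(f)\rho(g)$) and carries the flipped conjugate to the adjoint, and since $\mathrm{End}_G(L^2(X))$ is a $*$-algebra, $C(O)$ is closed under $*$ and under flips; hence it meets all the requirements of \Cref{def.coh} and contains $\coh(X)$. Under the hypothesis this collapses to $C(O)=C(X\times X)$.

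Next I would feed this into \Cref{cor.endg}, which identifies the von Neumann closure of $\rho(C(O))$ in $\cB(L^2(X))$ with $\mathrm{End}_G(L^2(X))$. With $C(O)=C(X\times X)$, the image $\rho(C(X\times X))$ contains in particular every rank-one operator with continuous kernel $\xi(x)\overline{\eta(y)}$ for $\xi,\eta\in C(X)$. Because $C(X)$ is dense in $L^2(X)$, such operators are $\sigma$-weakly dense in $\cB(L^2(X))$, so the von Neumann closure of $\rho(C(X\times X))$ is all of $\cB(L^2(X))$. Combining the two identifications gives $\mathrm{End}_G(L^2(X))=\cB(L^2(X))$.

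It then remains to convert the fullness of the intertwiner algebra into triviality of $\bG$. Using the isotypic decomposition $L^2(X)\cong\bigoplus_i H_i\otimes V_i$ and the fact (noted before \Cref{th.main}) that a minimal projection of the block $\cB(H_i)$ has rank $\dim V_i$ inside $\cB(L^2(X))$, the equality $\mathrm{End}_G(L^2(X))=\cB(L^2(X))$ forces every $V_i$ to be one-dimensional and only a single isotypic component to appear. The constant function is an invariant vector, since $\mu$ is $\beta$-invariant, so the trivial representation occurs; hence the unique irreducible type is the trivial one and $U$ is the trivial representation, i.e.\ $\beta(f)=f\otimes 1$. Finally, because $C(\bG)$ is generated by the coefficients of its defining coaction, a trivial coaction yields $C(\bG)=\bC$, which is exactly triviality of the quantum automorphism group.

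The steps through $\mathrm{End}_G(L^2(X))=\cB(L^2(X))$ are a fairly mechanical unwinding of \Cref{th.main} and \Cref{cor.endg}; the main obstacle is the last step, namely extracting triviality of $\bG$ as a quantum group (rather than mere triviality of the representation $U$) from fullness of the intertwiner algebra. This is where the multiplicity bookkeeping, the presence of the invariant constant vector, and the universality/faithfulness of the quantum automorphism group's action all have to be invoked carefully to pass from $\beta=\mathrm{id}\otimes 1$ back to $C(\bG)=\bC$.
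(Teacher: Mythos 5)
Your proposal is correct and follows essentially the same route as the paper: full coherent algebra forces $C(O)=C(X\times X)$, whose image under $\rho$ is $\sigma$-weakly dense in $\cB(L^2(X))$ (the paper phrases this via Hilbert--Schmidt operators, you via rank-one kernels), so $\mathrm{End}_G(L^2(X))=\cB(L^2(X))$, whence the faithful representation $U$, and then $\bG$, is trivial. Your last paragraph merely makes explicit the multiplicity-space bookkeeping that the paper leaves implicit in the phrase ``it now follows that $U$ is trivial,'' and you only really need the containment $\rho(C(O))\subseteq\mathrm{End}_G(L^2(X))$ from \Cref{th.main} rather than the full strength of \Cref{cor.endg}.
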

\begin{proof}
  Indeed, in that case $C(O)=C(X\times X)$, and hence the latter is contained in the algebra of intertwiners for the $\bG$-representation $L^2(X)$. Since the weak-$*$ closure of $C(X\times X)$ in $\cB(L^2(X))$ contains all Hilbert-Schmidt operators, the intertwiner algebra must be all of $\cB(L^2(X))$. 

It now follows that the (faithful) representation $U$ of $\bG$ on $L^2(X)$ is trivial, which in turn means that $\bG$ is trivial. 
\end{proof}

\begin{theorem}\label{th.triv-coh}
  The subset $\bY_{triv}$ of $\bY$ consisting of metric measure spaces with full coherent algebra is residual. 
\end{theorem}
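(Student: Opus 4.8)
The plan is to exhibit a dense $G_\delta$ subset contained in $\bY_{triv}$ and then invoke \Cref{le.baire}. The starting point is that, since $\coh(X)$ is by \Cref{def.coh} a norm-closed, unital, self-adjoint subalgebra of the commutative $C^*$-algebra $C(X\times X)$, the Stone--Weierstrass theorem gives
\begin{equation*}
  \coh(X)=C(X\times X)\iff \coh(X)\text{ separates the points of }X\times X.
\end{equation*}
Separation of a fixed pair of points is hard to control uniformly across a continuum, so I would discretize by scale. For a positive integer $N$ call a finite tuple $h=(h_1,\dots,h_m)$ of elements of $\coh(X)$ a \emph{$(1/N)$-separator} if $\|h(p)-h(q)\|_\infty\ge 1/N$ whenever $p,q\in X\times X$ lie at product-distance $\ge 1/N$, and let $G_N\subseteq\bY$ consist of those $X$ admitting such a tuple built as an explicit \emph{word} (a finite expression) in $d$ under the three operations of \Cref{def.coh}. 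If $X\in\bigcap_N G_N$ then, for any $p\ne q$, a $(1/N)$-separator with $N$ large enough separates them, so $\coh(X)$ separates points and is therefore full; thus $\bigcap_N G_N\subseteq\bY_{triv}$, and it suffices to prove that each $G_N$ is open and dense.

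For density I would first use that finite metric measure spaces are dense in $\bY$ (any compact space is a measured-Hausdorff limit of finite nets carrying the induced atomic approximations of $\mu$), and then perturb to arrange that all off-diagonal distances are distinct --- a change that can be made arbitrarily small in any $D_p$ by keeping the point set and the measure fixed. For such a finite space the coherent algebra is already all of $C(X\times X)$: pointwise functions of $d$ (available since $\coh(X)$ is closed under the standard, i.e. Hadamard, product and is unital) isolate each symmetric ``edge'' function $E_{xy}+E_{yx}$, and a single convolution $(E_{xy}+E_{yx})*(E_{yz}+E_{zy})$ produces the individual matrix unit $E_{xz}$ for distinct $x,y,z$ (the faithful measure contributes only the positive scalar $\mu(\{y\})$), so all matrix units lie in $\coh(X)$. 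A finite space with full coherent algebra plainly contains a $(1/N)$-separator for every $N$, so these spaces lie in every $G_N$; being dense, they make each $G_N$ dense.

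The main obstacle is the \textbf{openness} of $G_N$, which is where the measure-theoretic refinement of the Gromov--Hausdorff topology earns its keep. Given $X_0\in G_N$ witnessed by a word $w$, I would apply the \emph{same} word to the data $(d_X,\mu_X)$ of a nearby space $X$ and show that the resulting tuple $w(X)\in\coh(X)$ is again a $(1/N)$-separator; since $G_N$ is then a union over words of such open sets, it is open. Fixing a $t$-isometry $f\colon X\to X_0$ with $W_p(f_*\mu_X,\mu_{X_0})\le t$, the key estimate is that the value of $w$ at a point $p\in X\times X$ differs from its value at $f(p)\in X_0\times X_0$ by $O(t)$, proved by induction on the structure of $w$. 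The base case together with the flip and pointwise-product steps are immediate from $|d_X-d_{X_0}\circ f|\le t$ and uniform boundedness, while the convolution step $\int u\,v\,\mathrm{d}\mu_X$ is handled by pushing the integral forward along $f$ and invoking Kantorovich duality $W_1(f_*\mu_X,\mu_{X_0})=\sup_{\mathrm{Lip}\le 1}\bigl|\int\varphi\,\mathrm{d}(f_*\mu_X)-\int\varphi\,\mathrm{d}\mu_{X_0}\bigr|$ against the (uniformly Lipschitz) integrand. As all word-functions have Lipschitz constants and sup-norms bounded uniformly over a neighborhood of $X_0$, this yields $\|w(X)-w(X_0)\circ(f\times f)\|_\infty=O(t)$.

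With this transfer estimate in hand, openness follows by a compactness-and-slack argument: arranging $w(X_0)$ to separate at the slightly finer scale $1/(2N)$ with the slightly larger constant $2/N$, and using compactness of the set of pairs $(p,q)$ at product-distance $\ge 1/N$ to upgrade the pointwise $O(t)$ bound to a uniform one, one concludes that $w(X)$ remains a genuine $(1/N)$-separator once $D_p(X_0,X)$ is small. (That the witnessing separator may be taken to be an honest word rather than a norm-limit of words is a harmless preliminary reduction, since $(1/N)$-separation with a definite constant is an open condition in sup-norm.) This shows each $G_N$ is open, so $\bigcap_N G_N$ is a dense $G_\delta$ contained in $\bY_{triv}$, whence $\bY_{triv}$ is residual.
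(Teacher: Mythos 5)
Your proposal is correct and follows the same overall strategy as the paper --- reduce fullness of $\coh(X)$ to a quantitative point-separation condition on $X\times X$ via Stone--Weierstrass, get density from finite spaces with generic distances, and get openness from stability of the separating functions under small measured-Hausdorff perturbations --- but the execution differs in two ways worth recording. First, the paper takes the separating family to be just the convolution powers $d^{*1},\dots,d^{*N}$, so its basic sets $\bY_{N,m,p}$ (spaces where any two points of $X\times X$ at product-distance $\ge 1/m$ are $1/p$-separated by some $d^{*n}$, $n\le N$) are indexed by three integers and involve finitely many explicit functions; your $G_N$ quantifies over all words in $d$, which is more flexible but forces you to write $G_N$ as a union over words and to manage the resulting bookkeeping. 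Second, the paper proves openness by contradiction --- a sequence of counterexamples converging to a point of $\bY_{N,m,p}$, compactness to extract limit witnesses, and the assertion $d_n^{*i}(x_n,y_n)\to d^{*i}(x,y)$ --- whereas you prove that continuity directly by induction on the word, handling the convolution step with Kantorovich duality against $W_1\le W_p$; your version is more quantitative and in fact supplies the estimate the paper's proof silently relies on at that step. One point in your write-up does need to be made honest: $G_N$ as literally defined (separation constant exactly $1/N$ at scale exactly $1/N$, non-strict inequalities) need not be open, since the transfer estimate only yields $\|w(X)(p)-w(X)(q)\|_\infty\ge 1/N-O(t)$ for pairs at product-distance $\ge 1/N$, whose images under $f\times f$ are moreover only at distance $\ge 1/N-O(t)$. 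The fix is the one you gesture at: work with the slack sets $G_N'$ (separation $\ge 2/N$ at scale $1/(2N)$), observe $G_N'\subseteq G_N$ and $\bigcap_N G_N\subseteq\bigcap_N G_N'$ because a witness for $G_{4N}$ rescaled by the scalar $8$ inside $\coh(X)$ witnesses $G_N'$, and run the openness argument for $G_N'$, which now has room to absorb the $O(t)$ error. (The paper sidesteps this by mixing a non-strict inequality in the hypothesis of \Cref{eq:nmp} with a strict one in the conclusion.) With that repair your argument is complete, and your density step --- showing a finite space with distinct off-diagonal distances has \emph{full} coherent algebra by producing all matrix units from Hadamard functional calculus and one convolution --- is actually a little stronger than what the paper verifies, since the paper only checks that the powers $d^{*n}$ separate points there.
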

\begin{proof}
  We will argue that $\bY_{triv}$ contains a dense $G_\delta$ set. Since we are working in a complete metric space, this amounts to showing that it contains a countable intersection of open dense subsets of $(\bY,d_T)$.

  Fix positive integers $N$, $m$ and $p$ and denote by $\bY_{N,m,p}$ the subset of $\bY$ consisting of (classes of) metric spaces $(X,d,\mu)$ for which the following condition holds:
  \begin{equation}\label{eq:nmp}
    \text{Whenever }d(x,x')+d(y,y')\ge \frac 1m\text{ we have }|d^{*n}(x,y)-d^{*n}(x',y')|>\frac 1p\text{ for some }1\le n\le N. 
  \end{equation}
  We then have
  \begin{equation*}
    \bY_{triv} = \bigcap_{m}\bigcup_{N,p} \bY_{N,m,p},
  \end{equation*}
  and hence it suffices to argue that each $\bigcup_{N,p}\bY_{N,m,p}$ is open and dense in $(\bY,d_T)$. We prove these two claims separately.

  \vspace{.5cm}

  {\bf $\bigcup_{N,p}\bY_{N,m,p}$ is dense.} First, note that the set of isomorphism classes of finite metric measure spaces is dense in $\bY$. Furthermore, the metric of any finite metric space can be perturbed by arbitrarily small amounts as to ensure that the distance function $d:X\times X\to [0,\infty)$ is one-to-one off the diagonal $\Delta$ of $X$ and $d^{*2}$ is one-to-one on $\Delta$. Such finite metric measure spaces $(X,d,\mu)$ thus constitute a dense subset of $\bY$. On the other hand that set is contained in $\bigcup_{N,p}\bY_{N,m,p}$ for every positive $m$, hence the density claim.  

  \vspace{.5cm}
  
  {\bf $\bY_{N,m,p}$ is open.}

  Suppose not. This means that we can find a metric measure space $(X,d,\mu)$ in $\bY_{N,m,p}$ and a sequence $(X_n,d_n,\mu_n)$ of elements of $\bY\setminus \bY_{N,m,p}$ converging to $X$ in the topology we have equipped $\bY$ with.

  The assumptions mean that we can find points $x_n,x'_n,y_n,y'_n$ in $X_n$ such that
  \begin{enumerate}
    \renewcommand{\labelenumi}{(\arabic{enumi})}
  \item $d_n(x_n,x'_n) + d_n(y_n,y'_n)\ge \frac 1m$;
  \item for all $1\le i\le N$ we have $|d_n^{*i}(x_n,y_n)-d_n^{*i}(x'_n,y'_n)|\le \frac 1p$.    
  \end{enumerate}
  On the other hand, the convergence $(X_n,d_n,\mu_n)\to (X,d,\mu)$ means that there are $\varepsilon_n$-isometries $f_n:X_n\to X$ as in \Cref{def.Dp} for $\varepsilon_n\to 0$.

By compactness, after passage to a subsequence we can assume that $f_n(x_n)$ converge to $x\in X$ and similarly for $x'$, $y$ and $y'$. For each individual $i$ we then have
  \begin{equation*}
    d_n^{*i}(x_n,y_n)\to d^{*i}(x,y)\text{ and } d_n^{*i}(x'_n,y'_n)\to d^{*i}(x',y'),
  \end{equation*}
  and hence conditions (1) and (2) above imply respectively that
  \begin{itemize}
  \item $d(x,x') + d(y,y')\ge \frac 1m$;
  \item for all $1\le i\le N$ we have $|d^{*i}(x,y)-d^{*i}(x',y')|\le \frac 1p$.   
  \end{itemize}
  This, however, contradicts our assumption that $(X,d,\mu)$ belongs to $\bY_{N,m,p}$ and finishes the proof.
\end{proof}

\begin{theorem}\label{th.triv-q}
  The subset of $\bY$ consisting of metric measure spaces with trivial quantum automorphism groups is residual. 
\end{theorem}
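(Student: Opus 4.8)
The plan is to obtain this statement as an immediate consequence of the two preceding results, \Cref{th.triv-coh} and \Cref{le.triv-orb}, with essentially no new analytic input. First I would invoke \Cref{th.triv-coh} to produce the set $\bY_{triv}\subseteq\bY$ of (classes of) spaces whose coherent algebra is all of $C(X\times X)$, together with the conclusion that $\bY_{triv}$ is residual; unwinding the definition, this means $\bY_{triv}$ contains a dense $G_\delta$ subset of $\bY$. Next I would apply \Cref{le.triv-orb}, which asserts that every metric measure space with full coherent algebra has trivial quantum automorphism group. Writing $\bY_{\mathrm{q}}$ for the subset of $\bY$ consisting of classes with trivial quantum automorphism group, these two steps combine to give the inclusion $\bY_{triv}\subseteq\bY_{\mathrm{q}}$.

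The theorem then follows from the elementary observation that any superset of a residual set is residual: since the dense $G_\delta$ furnished by \Cref{th.triv-coh} is contained in $\bY_{triv}$ and hence in $\bY_{\mathrm{q}}$, the larger set $\bY_{\mathrm{q}}$ contains the same dense $G_\delta$ and is therefore residual as well. This is where \Cref{le.baire}, guaranteeing that $\bY$ is a Baire space, makes residuality a meaningful notion of largeness, but no further appeal to it is needed here beyond what was already used in \Cref{th.triv-coh}.

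The only point that deserves a word of justification — and the closest thing to an obstacle — is that $\bY_{\mathrm{q}}$ is genuinely a well-defined subset of $\bY$, i.e. that triviality of the quantum automorphism group is an invariant of the isomorphism class of $(X,d,\mu)$; this is clear, since an isomorphism of metric measure spaces transports the defining isometric measure-preserving coaction and hence identifies the corresponding compact quantum groups. With this in place, and with all of the genuine work already carried out in the Baire-category argument of \Cref{th.triv-coh} and in the intertwiner-theoretic passage of \Cref{le.triv-orb} (itself resting on \Cref{th.main}), the proof reduces to assembling these ingredients, and I expect it to be short.
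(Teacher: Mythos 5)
Your proposal is correct and follows exactly the paper's own one-line argument: combine \Cref{th.triv-coh} with \Cref{le.triv-orb} and observe that a superset of a residual set is residual. The extra remarks about well-definedness and the role of \Cref{le.baire} are harmless elaborations of the same approach.
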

\begin{proof}
  Simply combine \Cref{th.triv-coh,le.triv-orb}. 
\end{proof}

\subsection{Laplacians and coherent algebras}
\label{subse.lpl}

The main result of the present subsection is

\begin{proposition}\label{pr.lpl}
Let $(X,d)$ be a finite metric space equipped with the uniform probability measure. Then, the Laplace operator $\Delta$ on $X$ belongs to $\coh(X)$.    
\end{proposition}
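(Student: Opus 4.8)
The plan is to write the Laplacian explicitly in terms of the three operations available inside $\coh(X)$ — the pointwise (Hadamard) product, the convolution $*$, and the presence of the unit — starting from its single generator $d$. I will work with the incarnation $\Delta = \mathrm{diag}(r) - D$, where $D(x,y)=d(x,y)$ is the distance matrix and $r(x)=\sum_{y}d(x,y)$ is the row-sum (transmission) of $x$. Identifying an operator on $C(X)$ with its kernel in $C(X\times X)$ via the convolution representation $\rho$ of \Cref{se.innttw}, and using that $\mu$ is uniform so that $\rho(f)$ is $\tfrac1{|X|}$ times the matrix $(f(x,y))$, only harmless scalar factors separate $\Delta$ from its kernel; this is exactly why the precise incarnation (normalization, or using $d^2$ in place of $d$) is immaterial. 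The proof then reduces to checking that $D$, the diagonal matrix $\mathrm{diag}(r)$, and their difference all lie in $\coh(X)$.

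Two of the needed ingredients come for free: $D=d\in\coh(X)$ by definition, and the all-ones function $\mathbf 1$ — the unit of $C(X\times X)$ under pointwise multiplication — lies in $\coh(X)$ because the algebra is unital. The real content is producing the characteristic function $e$ of the diagonal (the identity matrix) inside $\coh(X)$, and this is the step I expect to be the main obstacle, since it is the only place where the hypotheses "finite'' and "genuine metric'' are actually used. Because $(X,d)$ is finite, $d$ assumes only finitely many values $0=t_0<t_1<\cdots<t_k$ on $X\times X$, and because $d$ is a metric it vanishes precisely on the diagonal. Choosing by Lagrange interpolation a polynomial $P$ with $P(0)=1$ and $P(t_i)=0$ for $1\le i\le k$, and applying it entrywise — legitimate since $\coh(X)$ is a unital $C^*$-algebra under the pointwise product, hence closed under Hadamard powers of $d$ and their linear combinations — yields $P(d)=e\in\coh(X)$.

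With $e$ in hand the remainder is bookkeeping. Convolving $d$ against the unit gives $(d*\mathbf 1)(x,z)=\tfrac1{|X|}\sum_y d(x,y)=\tfrac1{|X|}r(x)$, a function constant in its second argument, which lies in $\coh(X)$ by closure under $*$. Forming the pointwise product with the diagonal indicator $e$ and rescaling then produces the diagonal matrix $\mathrm{diag}(r)=|X|\,e\,(d*\mathbf 1)\in\coh(X)$. Finally $\Delta=\mathrm{diag}(r)-D$ is a linear combination of two elements of $\coh(X)$ and therefore lies in $\coh(X)$, which is the assertion. Any of the standard alternative incarnations is handled verbatim: one simply replaces $d$ by its Hadamard square $d^{\circ 2}\in\coh(X)$, or adjusts the scalar factors, without changing the structure of the argument.
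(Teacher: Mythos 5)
Your proof is correct, and for the diagonal term it takes a genuinely different and more economical route than the paper's. The paper splits $\Delta$ into a diagonal part $T$ and an off-diagonal part $A$ and handles $T$ indirectly: it invokes the coherent-configuration picture and proves (\Cref{le.aux}, via entrywise functional calculus on $d$ followed by an ordinary matrix square) that diagonal points lying in the same coherence class have identical multisets of distances to the other points, whence $T$ is constant on the diagonal classes and therefore lies in $\coh(X)$. You instead build the diagonal matrix explicitly as $|X|\,e\cdot(d*\mathbf 1)$ using only the closure properties listed in \Cref{def.coh}; this is shorter, entirely constructive, and sidesteps \Cref{le.aux} altogether. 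Your production of the diagonal idempotent $e$ by Lagrange interpolation is the same device the paper uses implicitly (functional calculus in the finite-dimensional Hadamard $C^*$-algebra $\coh(X)$). The one caveat is the incarnation you chose: the paper's $\Delta$ (\Cref{def.lpl}, following Rieffel) is built from the conductances $c_{xy}=1/d(x,y)$, so its off-diagonal part is $-c$ rather than $-d$ and its diagonal entries are $\sum_{y\ne x}c_{xy}$ rather than $\sum_y d(x,y)$; this is not obtained from your operator by rescaling or by passing to the Hadamard square of $d$. Your argument nevertheless adapts once you know the off-diagonal conductance matrix lies in $\coh(X)$ --- which your own interpolation trick supplies (interpolate $t\mapsto 1/t$ on the nonzero values of $d$ and send $0\mapsto 0$), and which is exactly the paper's step (1), carried out there via Hadamard inversion of $I+D$. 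So the ``different incarnation'' you wave at costs one extra line, but no new idea.
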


In particular, since as observed above we have $\coh(X)$ is contained in the orbital algebra $C(O)$ for any isometric action of a compact quantum group $G$ on $(X,d,\mu)$, isometric quantum actions automatically preserve the Laplacian. This gives an alternate take on \cite[Theorem 4.5]{gsw-0}, stating that the quantum isometry groups of finite metric spaces can be recovered as implementing quantum symmetries of natural Laplace operators attached to said metric spaces. 
Before going into the proof of \Cref{pr.lpl} we recall some background on Laplacians using \cite{rie-res} as our source. Following \cite[$\S$2]{rie-res}, we write
\begin{equation*}
  Z:=\{(x,y)\in X^2\ |\ x\ne y\}
\end{equation*}
for the off-diagonal of the Cartesian square $X^2$. We have a ``gradient'' operator
\begin{equation*}
  \partial:C(X)\to C(Z),\ \partial f(x,y) = f(x)-f(y)
\end{equation*}
and an inner product on $C(Z)$ defined by
\begin{equation*}
\braket{\omega,\omega'}:=\int_X \left(\sum_{x\ne y}\overline{\omega(x,y)}\omega'(x,y)c_{xy}\right)\ d\mu(y)
\end{equation*}
where $c_{xy}=\frac 1{d(x,y)}$ are the ``conductances'' attached to the metric, along with the usual inner product on $C(X)$ itself induced by $\mu$: 
\begin{equation*}
\braket{f,f'}:= \int_X \overline{f(x)}f'(x)\ d\mu(x).   
\end{equation*}

Finally, we have

\begin{definition}\label{def.lpl}
  The {\it Laplacian} on $X$ is the unique positive operator $\Delta$ on $L^2(X)$ satisfying
  \begin{equation*}
    \braket{\partial f,\partial f'} = \braket{f,\Delta f'},\ \forall f,f'\in C(X).
  \end{equation*}
\end{definition}

Since integration against $\mu$ is simply the averaging operator
\begin{equation*}
  \int_X f\ d\mu(x) = \frac 1{|X|} \sum_{x\in X}f(x)
\end{equation*}
the definition of $\Delta$ expands as
\begin{equation*}
  \sum_{z\in X} \overline{f(z)}\Delta f'(z) = \sum_{x\ne y}\overline{(f(x)-f(y))}(f'(x)-f'(y))c_{xy}
\end{equation*}
which in turn equals
\begin{equation*}
  2\sum_{x\ne y}\overline{f(x)}(f'(x)-f'(y))c_{xy}. 
\end{equation*}
All in all:
\begin{equation*}
  \Delta f(x) = 2\sum_{y\ne x}(f(x)-f(y))c_{xy}. 
\end{equation*}
In conclusion, written as an $X\times X$-indexed matrix operating on $L^2(X)\cong \bC^{|X|}$, $\Delta$ is (up to a global scalar we henceforth disregard) equal to $T+A$ where $T$ is diagonal with 
\begin{equation*}
  T_{xx} = \sum_{y\ne x}c_{xy} 
\end{equation*}
and $A$ is off-diagonal (i.e. its main diagonal is zero) and $A_{xy}=-c_{xy}$ for $x\ne y$.

\pf{pr.lpl}
\begin{pr.lpl}
  According to the discussion carried out above it will suffice to show that both $T$ and $A$ belong to $\coh(X)$. We prove these claims separately.

  {\bf (1): $A\in \coh(X)$. } We have $I+A=(I+D)^{-1}$ with respect to the Hadamard product where $I$ is the identity matrix and $D$ is the distance matrix with $D_{xy}=d(x,y)$. Since both $I$ and $D$ are contained in $\coh(A)$ and the latter is a finite-dimensional $C^*$-algebra under the Hadamard product the inverse
  \begin{equation*}
    (I+D)^{-1} = I+A
  \end{equation*}
  is a member of $\coh(A)$ as well, and hence so is $A$.

  {\bf (2): $T\in \coh(X)$.} Write $X\times X\to O$ for the surjection corresponding to the embedding $\coh(X)\subset C(X\times X)$, so that $\coh(X)=C(O)$. We have to argue that viewed as a function on $X\times X$, $T$ factors through $X\times X\to O$. In other words, the task is to show that $T\in C(X\times X)$ is constant along the equivalence classes $R_i$ making up the coherent configuration corresponding to $\coh(X)$ (see \Cref{def.coh-conf}). 

  Recall that the diagonal $\{(x,x)\ |\ x\in X\}$ is a union of classes $R_i$. Since the off-diagonal entries of $T$ vanish it will be enough to argue that $T$ is constant along the diagonal relations $R_i$.

  Suppose $(x,x)$ and $(y,y)$ belong to the same class. \Cref{le.aux} applied to singleton intervals shows that for every $r\in \bR$ the number of points $x\ne z\in X$ with $d(x,z)=r$ equals the analogous number for $y$. In other words\begin{equation*}
\{c_{xz},\ x\ne z\in X\}\text{ and } \{c_{yw},\ y\ne w\in X\}
\end{equation*}
coincide as multisets , and hence the $(x,x)$ and $(y,y)$ entries
\begin{equation*}
  \sum_{z\ne x}c_{zx}\text{ and }\sum_{w\ne y}c_{wy}
\end{equation*}
of $T$ coincide.
\end{pr.lpl}

\begin{lemma}\label{le.aux}
  Let $(X,d)$ be a finite metric space and $(x,x)$, $(y,y)$ diagonal points in the same equivalence class of the canonical coherent configuration attached to $\coh(X)$.

  Then, for any interval $[a,b]\subset\bR$ the sets
  \begin{equation}\label{eq:xysets}
    \{z\in X\ |\ d(z,x)\in [a,b]\}\quad \text{ and }\quad \{w\in X\ |\ d(w,y)\in [a,b]\}
  \end{equation}
  have the same cardinality. 
\end{lemma}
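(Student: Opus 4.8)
The plan is to realize the cardinality $\#\{z\in X\ |\ d(z,x)\in[a,b]\}$ as the value at the diagonal point $(x,x)$ of a single function lying in $\coh(X)$; its constancy along the coherence class that contains both $(x,x)$ and $(y,y)$ then forces the two counts to agree. First I would record the structural facts about $\coh(X)$ as a function algebra. Being a unital $C^*$-subalgebra of $C(X\times X)$ for the Hadamard (entrywise, hence commutative) product, $\coh(X)$ is a finite-dimensional commutative $C^*$-algebra, so it equals $C(O)$ for the finite quotient $X\times X\to O$ of \Cref{def.coh}; its minimal projections are the indicator functions $A(R_i)$ of the fibers $R_i$, which are exactly the coherence classes of the canonical coherent configuration (\Cref{def.coh-conf}) attached to $\coh(X)$. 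In particular every element of $\coh(X)$ is constant along each $R_i$, and since $d\in\coh(X)$ the distance function takes a single value $d_i$ on each class $R_i$.

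The main step is then a short construction. Write $N=|X|$, let $\mu$ be the uniform measure, and recall that the all-ones function $J$ (the Hadamard unit) lies in $\coh(X)$. Since $d$ is constant on each class, the ``band'' function
\[
  g \;:=\; \chi_{[a,b]}\circ d \;=\; \sum_{i\,:\,d_i\in[a,b]} A(R_i)
\]
is a sum of minimal projections, hence also lies in $\coh(X)$. Closure under convolution gives $g*J\in\coh(X)$, and unwinding the definition of $*$ (with $J\equiv 1$) yields
\[
  (g*J)(x',w) \;=\; \frac1N\sum_{z\in X} g(x',z) \;=\; \frac1N\,\#\{z\in X\ |\ d(x',z)\in[a,b]\},
\]
independent of $w$. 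Because $g*J\in\coh(X)=C(O)$ is constant on each coherence class and $(x,x),(y,y)$ lie in the same class by hypothesis, we get $(g*J)(x,x)=(g*J)(y,y)$; multiplying by $N$ and using the symmetry $d(z,x)=d(x,z)$ of the metric gives the equality of the two cardinalities in \Cref{eq:xysets}.

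The only real content sits in the first step: that $\coh(X)$ is genuinely the function algebra of a coherent configuration, with $d$ constant on classes and the diagonal a union of classes (so that ``diagonal class'' is meaningful). This is precisely the coherent-algebra/coherent-configuration equivalence recalled in the introduction, specialized to the Hadamard-$C^*$-algebra generated by $d$ and $J$; the fact that $\Delta$ is a union of classes follows because $d$ is constant on classes while $\Delta=\{d=0\}$. Once these facts are invoked, the counting argument above is immediate, so I do not expect a serious obstacle beyond carefully citing that equivalence — equivalently, the same conclusion can be read off from the intersection numbers $p_{i\overline{\imath}}^{\,k_0}$ of the configuration, which count $\#\{z\ |\ (x,z)\in R_i\}$ and depend only on the class $R_{k_0}$ of $(x,x)$.
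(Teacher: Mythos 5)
Your proof is correct and follows essentially the same route as the paper's: you place the band indicator $\chi_{[a,b]}\circ d$ in $\coh(X)$ (the paper phrases this as functional calculus applied to the distance matrix, you as a sum of minimal Hadamard projections --- the same thing in this finite commutative setting), then use closure under convolution to deposit the count $\#\{z : d(x,z)\in[a,b]\}$ on the diagonal, and conclude by constancy on coherence classes. The only cosmetic difference is that the paper convolves the band indicator with itself ($M^2$) where you convolve it with the all-ones function $J$; both yield the same diagonal entries up to normalization.
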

\begin{proof}
  Let $D$ be the distance matrix, as before (i.e. $D_{xy}=d(x,y)$). Since $D\in \coh(X)$, applying Borel functional calculus applied to the $C^*$-subalgebra $\coh(X)\subset C(X\times X)$ yields
  \begin{equation*}
    f(D):=\text{the matrix with entries }f(D_{xy})\in \coh(X). 
  \end{equation*}
This holds in particular for $f=\chi_{[a,b]}$ (the characteristic function of $[a,b]$), leading to $M\in \coh(A)$ where
  \begin{equation*}
    M_{x,y}=
    \begin{cases}
      1,& \text{if }d(x,y)\in [a,b]\\
      0,& \text{otherwise}
    \end{cases}
  \end{equation*}
  
  Now, the square $M^2$ with respect to the usual, matrix multiplication is also a member of $\coh(X)$, and note that its diagonal entry $M^2_{xx}$ is exactly the cardinality of the set on the left hand side of \Cref{eq:xysets} (and similarly for $M^2_{yy}$). Since $(x,x)$ and $(y,y)$ are assumed in the same coherence class, $M^2\in \coh(X)$ must assign them equal value, as claimed.
\end{proof}


\bigskip

\subsection*{Acknowledgments}
This work was partially supported by NSF grant DMS-1801011.

I would like to thank Teodor Banica, Debashish Goswami, Martino Lupini, Laura Man{\v c}inska and David Roberson for valuable input on the contents of the paper and related matters.


\bibliographystyle{plain}

\def\polhk#1{\setbox0=\hbox{#1}{\ooalign{\hidewidth
  \lower1.5ex\hbox{`}\hidewidth\crcr\unhbox0}}}

\Addresses

\end{document}